\pdfoutput=1
\documentclass[11pt, a4paper, english]{amsart}
\usepackage{amsmath} 
\usepackage{amsthm} 
\usepackage{amssymb} 
\usepackage[dvipsnames]{xcolor}
\usepackage{amscd} 
\usepackage{mathtools}

\usepackage{subcaption}

\usepackage{array} 
\usepackage{newtxtext,newtxmath} 

\usepackage[cal=boondoxo,scr=euler]{mathalfa}
\usepackage[backref=page,linktocpage]{hyperref} 
\usepackage{cleveref} 
\usepackage{caption} %
\usepackage{graphics,graphicx} 
\usepackage{tikz,tikz-cd} 

\usetikzlibrary{graphs,graphs.standard,calc}

\usetikzlibrary{decorations.pathreplacing,}

\usepackage{enumerate} 

\DeclareMathAlphabet{\mathsf}{OT1}{\sfdefault}{m}{n}

\newcommand{\nocontentsline}[3]{}
\newcommand{\tocless}[2]{\bgroup\let\addcontentsline=\nocontentsline#1{#2}\egroup}

\usepackage[margin=1.25in]{geometry}
\linespread{1.08}

\usepackage{verbatim}

\usepackage{scalerel}

\makeatletter
\def\dual#1{\expandafter\dual@aux#1\@nil}
\def\dual@aux#1/#2\@nil{\begin{tabular}{@{}c@{}}#1\\#2\end{tabular}}
\makeatother

\makeatletter
\@namedef{subjclassname@2020}{\textup{2020} Mathematics Subject Classification}
\makeatother

\DeclareMathAlphabet{\amathbb}{U}{bbold}{m}{n}

\hypersetup{
    colorlinks = true,
    linkbordercolor = {white},
    linkcolor = {BrickRed},
    anchorcolor = {black},
    citecolor = {BrickRed},
    filecolor = {cyan},
    menucolor = {BrickRed},
    runcolor = {cyan},
    urlcolor = {black}
}

\usetikzlibrary{automata}
\usepackage{bbm}

\newtheoremstyle{teoremas}
{11pt}
{11pt}
{\itshape}
{}
{\bfseries}
{}
{.5em}
{}

\theoremstyle{teoremas}
\newtheorem{theorem}{Theorem}[section]

\newtheorem{lemma}[theorem]{Lemma}
\newtheorem{proposition}[theorem]{Proposition}

\newtheoremstyle{definition}
{11pt}
{11pt}
{}
{}
{\bfseries}
{}
{.5em}
{}

\theoremstyle{definition}
\newtheorem{definition}[theorem]{Definition}
\newtheorem{thmdefinition}[theorem]{Theorem-Definition}

\newtheorem{question}[theorem]{Question}
\newtheorem{example}[theorem]{Example}
\newtheorem{remark}[theorem]{Remark}

\crefname{theorem}{theorem}{theorems}
\Crefname{theorem}{Theorem}{Theorems}
\crefname{lemma}{lemma}{lemmas}
\Crefname{lemma}{Lemma}{Lemmas}
\crefname{proposition}{proposition}{propositions}
\Crefname{proposition}{Proposition}{Propositions}

\DeclareMathOperator{\rk}{rk}

\newcommand{\M}{\mathsf{M}}
\newcommand{\N}{\mathsf{N}}
\newcommand{\U}{\mathsf{U}}

\newcommand{\LL}{\mathsf{\Lambda}}
\newcommand{\PP}{\mathsf{\Pi}}
\newcommand{\C}{\mathsf{C}}
\renewcommand{\lambda}{\uplambda}

\newcommand{\rank}{\operatorname{rk}}

\renewcommand{\H}{\mathrm{H}}
\newcommand{\CH}{\mathrm{CH}}

\newcommand{\IH}{\mathrm{IH}}

\newcommand{\cL}{\mathcal{L}}
\newcommand{\Rep}{\operatorname{Rep}}
\newcommand{\gr}{\operatorname{gr}}
\newcommand{\VRep}{\operatorname{VRep}}
\newcommand{\Ind}{\operatorname{Ind}}
\newcommand{\Res}{\operatorname{Res}}
\newcommand{\Aut}{\operatorname{Aut}}

\AtBeginDocument{%
   \def\MR#1{}
}

\title[Equivariant Kazhdan--Lusztig deletion formulas for matroids]{Deletion formulas for equivariant Kazhdan--Lusztig polynomials of matroids}

\author[L.~Ferroni]{Luis~Ferroni}

\address{(L.~Ferroni)
  Institute for Advanced Study, Princeton, NJ, USA
}
\email{ferroni@ias.edu}

\thanks{Luis Ferroni is a member at the Institute for Advanced Study, supported by the Minerva Research Foundation, and was also partially supported by the Swedish Research Council, Grant 2018-03968. Jacob Matherne received support from a Simons Foundation Travel Support for Mathematicians Award MPS-TSM00007970.}

\author[J.~P.~Matherne]{Jacob~P.~Matherne}

\address{(J. P. Matherne)
Department of Mathematics, North Carolina State University, Raleigh, NC, USA.
}
\email{jpmather@ncsu.edu}

\author[L.~Vecchi]{Lorenzo~Vecchi}
\address{(L. Vecchi)
  Department of Mathematics, KTH Royal Institute of Technology, Stockholm, Sweden
}

\email{lvecchi@kth.se}

\subjclass[2020]{Primary: 05B35}

\allowdisplaybreaks

\begin{document}

\begin{abstract}
      We study equivariant Kazhdan--Lusztig (KL) and $Z$-polynomials of matroids. We formulate an equivariant generalization of a result by Braden and Vysogorets that relates the equivariant KL and $Z$-polynomials of a matroid with those of a single-element deletion. We also discuss the failure of equivariant $\gamma$-positivity for the $Z$-polynomial.  As an application of our main result, we obtain a formula for the equivariant KL polynomial of the graphic matroid gotten by gluing two cycles.  Furthermore, we compute the equivariant KL polynomials of all matroids of corank~$2$ via valuations. This provides an application of the machinery of Elias, Miyata, Proudfoot, and Vecchi to corank $2$ matroids, and it extends results of Ferroni and Schr\"oter.
\end{abstract}

\maketitle

\section{Introduction}

\subsection{Overview}

The Kazhdan--Lusztig polynomials of matroids were introduced by Elias, Proudfoot, and Wakefield in \cite{elias-proudfoot-wakefield}. They are prominent objects in the singular Hodge theory for matroids developed by Braden, Huh, Matherne, Proudfoot, and Wang \cite{braden-huh-matherne-proudfoot-wang}. They take their names due to certain features they share with the classical Kazhdan--Lusztig polynomials arising from intervals in the Bruhat order poset for Coxeter groups \cite{kazhdan-lusztig}. 

One of the most fundamental objects in the theory of Kazhdan--Lusztig (KL) polynomials of matroids is the \emph{intersection cohomology module}. We now recall its construction.  Starting from a matroid $\M$, one constructs the \emph{graded M\"obius algebra} $\H(\M)$ and the augmented Chow ring $\CH(\M)$ (both objects defined in \cite{semismall}). The intersection cohomology module $\IH(\M)$ is the unique (up to isomorphism) indecomposable $\H(\M)$-submodule of $\CH(\M)$ containing the degree-zero piece, $\CH^0(\M)\cong \H^0(\M)$. 
A further object to consider is the \emph{stalk} at the empty flat of $\IH(\M)$, denoted by $\IH(\M)_{\varnothing}$. The Kazhdan--Lusztig polynomial of $\M$ is the Hilbert--Poincar\'e series of $\IH(\M)_{\varnothing}$, whereas the Hilbert--Poincar\'e series of $\IH(\M)$ is the so-called \emph{$Z$-polynomial} of $\M$, introduced and studied first by Proudfoot, Xu, and Young in \cite{proudfoot-xu-young}.

Subtle refinements of the Kazhdan--Lusztig and the $Z$-polynomial have been introduced in \cite{gedeon-proudfoot-young-equivariant,proudfoot-xu-young}, and have been extensively studied in the literature, for example in \cite{proudfoot-equivariant1,xie-zhang,proudfoot-equivariant2,gao-xie-yang,karn-proudfoot-nasr-vecchi,gao-li-xie-yang-zhang}. They are equivariant analogues that keep track of the symmetries of the matroid. Let us recall their definitions. Whenever a group $W$ acts on the ground set of $\M$ preserving its collection of bases, we will write $W \curvearrowright \M$ and call this datum an equivariant matroid. In such a case, we will associate to $\M$ two graded $W$-representations $P_{\M}^W(x)$ and $Z_{\M}^W(x)$ satisfying a certain recursion given in Definition~\ref{def:equivariant-kl-z-polynomials}.  (Note that these polynomials are a priori only graded virtual $W$-representations, but \cite[Theorem 1.3]{braden-huh-matherne-proudfoot-wang} asserts that they are graded honest $W$-representations.)
It is sensible to ask how equivariant Kazhdan--Lusztig and $Z$-polynomials relate to their counterparts for single-element deletions of the matroid. In the non-equivariant setting, a striking recursion found by Braden and Vysogorets \cite{braden-vysogorets} establishes such a relation. The main result of the present paper is an equivariant generalization of their formula.

We note that one obstruction, which was already pointed out in \cite[Remark~2.10]{braden-vysogorets}, is that the action $W \curvearrowright \M$ is not an action on the deletion $\M \smallsetminus \{i\}$. Therefore, any reasonable equivariant deletion formula is, at best, with respect to the action of $W_i$, the stabilizer of the element $i$. In order to state the equivariant deletion formula, we first define for every element $i$ of the ground set the following distinguished subset of the lattice of flats $\mathcal{L}(\M)$:
\begin{equation*}
        \mathscr{S}_i := \mathscr{S}_i(\M) =  \left\{ F \in \mathcal{L}(\M) \mid  F\subsetneq E\smallsetminus\{i\} \text{ and } F\cup\{i\}\in \mathcal{L}(\M)\right\}.
    \end{equation*}
Moreover, we define the equivariant $\tau$-invariant of $\M$ as
    \[
    \tau^W(\M):= \begin{cases}
        [x^j]P_\M^W(x) &\text{if $\rk(\M) = 2j+1$,}\\
        0 & \text{otherwise,}
    \end{cases}
    \]
in analogy with \cite[Definition~2.6]{braden-vysogorets}.
\begin{theorem}\label{thm:equivariant-deletion-formula}
    Let $\M$ be a loopless matroid of rank $k$ on $E$, and let $i\in E$ be an element that is not a coloop. Then,
    \begin{align*}
        P_\M^{W_{i}}(x) &= P_{\M \smallsetminus \{i\}}^{W_i}(x) - x \, P_{\M / \{i\}}^{W_i}(x) \\
        &\qquad + \sum_{[F] \in \mathscr{S}_i/W_i} x^{\frac{k -\rk(F)}{2}} \, \Ind_{W_F\cap W_i}^{W_i}\left( \tau^{W_F \cap W_i}(\M/{(F\cup \{i\})}) \boxtimes P_{\M|_F}^{W_F\cap W_i}(x) \right), \text{ and}\\
        Z_\M^{W_i}(x) &= Z_{\M \smallsetminus \{i\}}^{W_i}(x) + \sum_{[F] \in \mathscr{S}_i/W_i} x^{\frac{k-\rk(F)}{2}} \, \Ind_{W_F\cap W_i}^{W_i}\left( \tau^{W_F \cap W_i}(\M/{(F\cup \{i\})}) \boxtimes Z_{\M|_F}^{W_F\cap W_i}(x) \right),
    \end{align*}
    where $W_F$ is the stabilizer of the flat $F$, and $\mathscr{S}_i/W_i$ is the quotient of $\mathscr{S}_i$ by the group action $W_i$.
\end{theorem}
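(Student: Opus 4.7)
The plan is to adapt the strategy of Braden and Vysogorets \cite{braden-vysogorets} to the equivariant setting, working throughout with the action of $W_i$. The starting observation is that $W_i$ preserves $\M$, $\M\smallsetminus\{i\}$, and $\M/\{i\}$, and that for each flat $F \in \mathscr{S}_i$ the common stabilizer $W_F \cap W_i$ preserves both $\M|_F$ and $\M/(F \cup \{i\})$. All of the algebraic structures involved---the graded M\"obius algebra $\H(\M)$, the augmented Chow ring $\CH(\M)$, and the intersection cohomology module $\IH(\M)$---are functorial with respect to matroid automorphisms and hence carry compatible $W_i$-actions.

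I would then invoke the non-equivariant decomposition of $\IH(\M)$ established in \cite{braden-vysogorets} as a graded $\H(\M\smallsetminus\{i\})$-module: one distinguished summand is canonically identified with $\IH(\M\smallsetminus\{i\})$, and the remaining summands are indexed by the flats $F \in \mathscr{S}_i$. The $F$-summand is a grading shift by $(k-\rk(F))/2$ of the tensor product of the top-degree piece of $\IH(\M/(F\cup\{i\}))_{\varnothing}$---which is precisely what the $\tau$-invariant extracts---with $\IH(\M|_F)$. The isomorphisms realizing this decomposition are built from canonical operations (multiplication by M\"obius algebra generators $y_F$ and pullback/pushforward-type maps coming from the semi-small decomposition of \cite{semismall}), all of which are functorial under matroid isomorphisms. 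This functoriality forces the decomposition to be $W_i$-equivariant once the flats are grouped into orbits: the standard principle that a direct sum indexed by a transitive $W$-set is canonically an induced representation from the stabilizer of a representative then packages the contribution of each orbit $[F] \in \mathscr{S}_i/W_i$ as $\Ind_{W_F \cap W_i}^{W_i}\bigl(\tau^{W_F \cap W_i}(\M/(F\cup\{i\})) \boxtimes \IH(\M|_F)\bigr)$, shifted by $x^{(k-\rk(F))/2}$.

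Taking the graded $W_i$-character of this decomposition yields the $Z$-polynomial formula directly, since $Z_\M^{W_i}(x)$ is by definition the graded character of $\IH(\M)$ and the $F$-summand of the restriction contributes $Z_{\M|_F}^{W_F\cap W_i}(x)$. For the $P$-polynomial formula, I would pass to the stalk at the empty flat by quotienting by the ideal generated by positive-degree elements of $\H(\M)$: the distinguished summand contributes $P_{\M\smallsetminus\{i\}}^{W_i}(x) - x\, P_{\M/\{i\}}^{W_i}(x)$, via the equivariant version of the short exact sequence of \cite{braden-vysogorets} relating $\IH(\M\smallsetminus\{i\})$ to $\IH(\M\smallsetminus\{i\})_{\varnothing}$ and $\IH(\M/\{i\})_{\varnothing}$, while each $F$-summand passes to its own stalk $\IH(\M|_F)_{\varnothing}$, whose character is $P_{\M|_F}^{W_F \cap W_i}(x)$.

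The main technical obstacle is checking rigorously that every ingredient of the Braden--Vysogorets construction---the module decomposition, the identification of the $F$-summand with a shift of $\tau \boxtimes \IH(\M|_F)$, and the short exact sequence computing the stalk at $\varnothing$---is built entirely from canonical, functorial operations, and is therefore $W_i$-equivariant in the sense above. Once this is in hand, the remaining work is bookkeeping: grouping the summands over $W_i$-orbits of $\mathscr{S}_i$, recognizing them as induced representations via Frobenius reciprocity, and taking graded $W_i$-characters of the resulting isomorphism of graded $W_i$-representations.
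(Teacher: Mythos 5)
Your proposal follows a genuinely different route from the paper, but it rests on an input that is not available at the level of generality required. The decomposition of $\IH(\M)$ as a graded $\H(\M\smallsetminus\{i\})$-module into a copy of $\IH(\M\smallsetminus\{i\})$ plus shifted summands $\tau(\M/(F\cup\{i\}))\otimes \IH(\M|_F)$ indexed by $F\in\mathscr{S}_i$ is \emph{not} established in \cite{braden-vysogorets} for arbitrary matroids: there it is obtained only in the realizable case, via the decomposition theorem applied to the map of varieties associated to $\M$ and $\M\smallsetminus\{i\}$, and the general categorified statement is left open. The combinatorial proof of \cite[Theorem~2.8]{braden-vysogorets} --- and the proof in this paper --- deliberately avoids any such decomposition. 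The argument here is purely formal: one works in the free $\gr_{\mathbb{Z}}\VRep(W_i)$-module $\mathcal{H}^{W_i}(\M)$ on the orbit set $\mathcal{L}(\M)/W_i$, introduces the zeta element $\upzeta[E]^{W_i}$ whose coefficients are (up to normalization) the induced equivariant KL polynomials of the contractions $\M/G$, defines the deletion map $\Delta^{W_i}$ sending $[F]$ to a shift of $[F\smallsetminus\{i\}]$, and computes the coefficient of $[\varnothing]$ in $\Delta^{W_i}(\upzeta[E]^{W_i})$ in two ways --- once directly (producing $P_\M^{W_i}$ and $P_{\M/\{i\}}^{W_i}$, which both land on $[\varnothing]$) and once by re-expanding $\Delta^{W_i}(\upzeta[E]^{W_i})$ in terms of $\upzeta[E\smallsetminus\{i\}]^{W_i}$ and the correction terms $\tau^{W_F\cap W_i}(\M/(F\cup\{i\}))\boxtimes\upzeta[F]^{W_F\cap W_i}$, using only the defining degree and palindromicity constraints of Definition~\ref{def:equivariant-kl-z-polynomials}. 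No intersection cohomology is invoked. So to make your argument work you would first have to prove the categorified decomposition for all matroids, which is a substantially harder theorem than the one being proved.

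A secondary, but still genuine, issue: even granting the decomposition non-equivariantly, your appeal to ``functoriality forces the decomposition to be $W_i$-equivariant'' is too quick. A direct sum decomposition of a module is not canonical in general; the subspaces realizing the $F$-summands involve choices of splittings, and one must verify that these splittings can be chosen compatibly with the $W_F\cap W_i$-action (e.g., by averaging, in characteristic zero) before the orbit sums can be recognized as induced representations. By contrast, the identity the paper actually needs --- the second expansion of $\Delta^{W_i}(\upzeta[E]^{W_i})$ --- is an equality of elements in a free module with a $W_i$-stable basis, where equivariance is automatic. Your description of the stalk computation for the $P$-polynomial (extracting $P_{\M\smallsetminus\{i\}}^{W_i}(x)-x\,P_{\M/\{i\}}^{W_i}(x)$ from the distinguished summand) is correct in spirit and mirrors the paper's observation that $[\varnothing]$ receives contributions from both $\left(\upzeta_{\varnothing}^E\right)^{W_i}$ and $\left(\upzeta_{\{i\}}^E\right)^{W_i}$, but as written it cannot be completed without the missing decomposition.
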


Among the various reasons to search for a theorem like the preceding one, we will emphasize the following two that led us to it. First, as was established in a prequel \cite{ferroni-matherne-stevens-vecchi} to the present paper, an interesting consequence of the non-equivariant version of the above result is that it leads to a proof of the $\gamma$-positivity of the $Z$-polynomial of $\M$. A well-known generalization of $\gamma$-positivity is the notion of \emph{equivariant $\gamma$-positivity} (see \cite[Section~5.2]{athanasiadis-gamma-positivity} and Definition~\ref{def:eqgammapos} below). As we mentioned in \cite[Remark~4.10]{ferroni-matherne-stevens-vecchi}, and will explain in detail here, one may show that the equivariant $Z$-polynomial fails to be equivariantly  $\gamma$-positive.   The second motivation stems from a computational perspective: the problem of calculating these polynomials both equivariantly and non-equivariantly is notoriously difficult. It is well known that the defining recursions, as well as the geometric interpretation via (stalks of) intersection cohomologies, lend themselves very well for theoretical endeavours, but a significant drawback is that they usually do not simplify the task of actually computing these polynomials. In this direction, we give two applications of  Theorem~\ref{thm:equivariant-deletion-formula} in Section~\ref{sec:applications}.  First, we obtain a formula for the equivariant KL polynomial of the graphic matroid gotten by gluing together two cycles along an edge.  As a further application of our main result, we will leverage the framework developed by Ferroni and Schr\"oter \cite{ferroni-schroter} on split matroids, together with the notion of categorical matroid valuative invariant introduced by Elias, Proudfoot, Miyata, and Vecchi \cite{elias-miyata-proudfoot-vecchi}. This will allow us to state helpful formulas that can be used to compute equivariant KL polynomials of arbitrary corank $2$ matroids under certain group actions.

\section{Preliminaries}

Throughout this paper we shall assume that the reader is acquainted with the basic notions of matroid theory, for which a basic reference is \cite{oxley}. We also assume familiarity with the (non-equivariant) Kazhdan--Lusztig theory of matroids \cite{elias-proudfoot-wakefield,gao-xie,proudfoot-xu-young}, and matroid polytopes and valuations, for which we refer to \cite{derksen-fink,ardila-sanchez,stellahedral,ferroni-schroter}.

\subsection{Equivariant Kazhdan--Lusztig and \texorpdfstring{$Z$}{Z}-polynomials} Let us review the notion of equivariant Kazhdan--Lusztig and $Z$-polynomials of matroids, introduced in \cite{gedeon-proudfoot-young-equivariant} and \cite{proudfoot-xu-young}. A further recommended reading is \cite[Appendix~A]{braden-huh-matherne-proudfoot-wang}. The key objects that we study in this paper can be presented via the following result, which serves also as a definition.

\begin{thmdefinition}\label{def:equivariant-kl-z-polynomials}
    There is a unique way of associating to each loopless matroid $\M$ and each group action $W\curvearrowright \M$, two graded virtual representations $P_\M^W(x), Z_\M^W(x) \in \gr\VRep(W)$ in such a way that
    \begin{enumerate}[\normalfont(i)]
        \item If $\rk(\M) = 0$, then $P_\M^W(x) = Z_\M^W(x) = \mathbf{1}_W$.
        \item If $\rk(\M) > 0$, then $\deg P_\M^W(x) < \frac{1}{2}\rk(\M)$.
        \item For every $\M$,
        \[Z_\M^W(x) = \sum_{[F] \in \cL (\M)/W} x^{\rk(F)} \Ind_{W_F}^W P_{\M/F}^{W_F}(x)\]
        is palindromic and has degree $\rk(\M)$.
    \end{enumerate}
    Here, $\mathbf{1}_W$ is the trivial representation\footnote{In previous related work, the trivial representation was denoted by $\tau_W$. We chose a different notation to avoid confusion with the $\tau$-invariant introduced in Theorem~\ref{thm:equivariant-deletion-formula}.} of $W$, $W_F$ is the stabilizer of $F$, and $\mathcal{L}(\M)/W$ denotes the quotient of the lattice of flats of $\M$ by the group action $W$.
\end{thmdefinition}

\begin{remark}
    The graded representations resulting from the above statement may be viewed as a categorification of the ordinary Kazhdan--Lusztig polynomial $P_\M(x)$ and $Z$-polynomial $Z_\M(x)$. In fact, one has that the $j$-th coefficient 
        \[
            [x^j]P_\M(x) = \dim \left([x^j] P_\M^W(x)\right)
        \]
    for every group action $W \curvearrowright \M$, and similarly for $Z_\M^W(x)$. In particular, if one considers $W$ to be a single-element group, one recovers exactly the ordinary polynomials.
\end{remark}

The following theorem completely characterizes the coefficients of the equivariant Kazhdan--Lusztig polynomial for uniform matroids under the action of the symmetric group $\mathfrak{S}_n$ in terms of irreducible representations (which are in bijection with Young diagrams with $n$ boxes). See also \cite[Theorem~3.7]{gao-xie-yang} for a formula involving skew Specht modules. Let $V_\uplambda$ denote the Specht module associated to the Young diagram (equivalently, the partition) of shape $\uplambda$.

\begin{theorem}[{\cite[Theorem~3.1]{gedeon-proudfoot-young-equivariant}}]\label{thm:unif-equiv-sn}
    Let $\U_{k,n}$ be the rank $k$ uniform matroid with $n$ elements. The constant term of $P_{\U_{k,n}}^{\mathfrak{S}_n}(x)$ is the trivial $\mathfrak{S}_n$-representation $V_{[n]}$.
    The $j$-th coefficient is
    \[
    [x^j]P_{\U_{k,n}}^{\mathfrak{S}_n}(x) = \sum_{b=1}^{\min(n-k,k-2j)} V_{[n-2j-b+1,b+1,2^{j-1}]}.
    \]
\end{theorem}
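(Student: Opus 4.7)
The plan is to prove the formula by induction on the corank $n-k$, using the defining recursion of Definition~\ref{def:equivariant-kl-z-polynomials} together with uniqueness. For $\U_{k,n}$, the flats of rank $r < k$ are exactly the $r$-subsets of the ground set; $\mathfrak{S}_n$ acts transitively on them, and the stabilizer of any such flat is the Young subgroup $\mathfrak{S}_r \times \mathfrak{S}_{n-r}$. The contraction is $\U_{k,n}/F \cong \U_{k-r, n-r}$, on which this stabilizer acts only through its second factor. Consequently, the recursion (iii) reduces to
\begin{equation*}
Z_{\U_{k,n}}^{\mathfrak{S}_n}(x) = P_{\U_{k,n}}^{\mathfrak{S}_n}(x) + \sum_{r=1}^{k-1} x^r \, \Ind_{\mathfrak{S}_r \times \mathfrak{S}_{n-r}}^{\mathfrak{S}_n}\!\left( \mathbf{1} \boxtimes P_{\U_{k-r, n-r}}^{\mathfrak{S}_{n-r}}(x) \right) + x^k \, \mathbf{1}_{\mathfrak{S}_n},
\end{equation*}
and under the Frobenius characteristic each induction corresponds to multiplication by $h_r = s_{(r)}$, governed by Pieri's rule.

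Let $Q_{k,n}(x)$ denote the polynomial on the right-hand side of the theorem statement. To identify $P_{\U_{k,n}}^{\mathfrak{S}_n}$ with $Q_{k,n}$, I would invoke uniqueness: the defining properties (i)--(iii) pin down $P$ once one fixes the degree bound $\deg P < k/2$ together with the palindromicity of $Z$. It therefore suffices to check (a) that $\deg Q_{k,n}(x) < k/2$, which is immediate from the constraint $k - 2j \geq 1$ built into the summation bounds; and (b) that after substituting each $P_{\U_{k-r, n-r}}^{\mathfrak{S}_{n-r}}$ by $Q_{k-r, n-r}$ (valid by induction on corank, with base case $n=k$ where $\U_{k,k}$ is Boolean and $P = \mathbf{1}_{\mathfrak{S}_k}$), the right-hand side of the displayed recursion becomes palindromic of degree $k$.

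The bulk of the work lies in step (b): verifying the palindromic identity at the level of $\mathfrak{S}_n$-representations. For each $\lambda \vdash n$, one must show that the multiplicity of $V_\lambda$ at $x^i$ equals its multiplicity at $x^{k-i}$. Expanding via Pieri's rule, both multiplicities become sums over triples $(r,j,b)$ counting horizontal strips of length $r$ that can be appended to a partition of shape $[n-r-2j-b+1, b+1, 2^{j-1}]$ to produce $\lambda$. The hard part will be producing an explicit bijection between triples contributing to degree $i$ and those contributing to degree $k-i$, exploiting the rigid column-of-twos structure of these partitions. A natural candidate pairs a contribution of second-row length $b+1$ with one whose parameters are determined by palindromic symmetry, but one must separately handle the edge cases where $\lambda$ has very few columns of height $\geq 2$, together with the boundary contributions $r = 0$ (from $P_{\U_{k,n}}^{\mathfrak{S}_n}$ itself) and $r = k$ (from the top flat of $\U_{k,n}$).
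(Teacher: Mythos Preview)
The paper does not prove this theorem; it is quoted from \cite{gedeon-proudfoot-young-equivariant} and used as a black box, so there is no in-paper argument to compare against. Your overall strategy---substitute the candidate formula into the defining recursion, convert the inductions from Young subgroups into Pieri multiplications, and verify palindromicity of the resulting $Z$-polynomial---is in fact the approach taken in the cited source.

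There is, however, a genuine slip in your inductive scheme. You propose to induct on the corank $n-k$, but every contraction $\U_{k-r,n-r}$ appearing in the recursion has corank $(n-r)-(k-r)=n-k$, identical to that of $\U_{k,n}$. Thus your inductive hypothesis, as stated, is never available for those terms. The fix is immediate: induct on $n$ (equivalently on the rank $k$), since each $\U_{k-r,n-r}$ with $r\ge 1$ has strictly smaller ground set; the base case is still Boolean.

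Beyond this, your outline is candid that step~(b) is the entire content and that you have only described the shape of a hoped-for bijection rather than constructed one. In \cite{gedeon-proudfoot-young-equivariant} this palindromicity check is carried out via an explicit case analysis of the horizontal strips Pieri allows on the shapes $[n-2j-b+1,\,b+1,\,2^{j-1}]$; completing the argument requires actually doing that bookkeeping, including the boundary contributions you flag.
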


It is generally desirable to produce formulas for our equivariant polynomials with respect to the full automorphism group $\Aut(\M)$ of the matroid or, if that is not achievable, with respect to the largest possible group acting faithfully. In fact, if $W\curvearrowright \M$ is an equivariant matroid where $W \leq \Aut(\M)$ is any group of symmetries acting on the matroid, we can compute either of our equivariant polynomials $F_\M^W(x)$ as the restriction 
\[
F_\M^W(x) = \Res_W^{\Aut(\M)}F_\M^{\Aut(\M)}(x).
\]
In this sense, Theorem~\ref{thm:unif-equiv-sn} provides a complete answer for every possible group action $W\curvearrowright \U_{k,n}$ (where some cumbersome computations may be hidden in performing the restrictions on the coefficients).  In contrast, there is no way in general to recover $P_\M^W(x)$ from the polynomial $P_M^{W_i}(x)$ that we compute using Theorem \ref{thm:equivariant-deletion-formula}.

\subsection{Equivariant gamma-positivity} By definition, the equivariant $Z$-polynomial is palindromic; i.e., for each $0\leq j\leq \rk(\M)$, the coefficients of degree $j$ and $\rk(\M) - j$ are isomorphic as representations of $W$. Moreover, it is proved in \cite[Theorem~1.3]{braden-huh-matherne-proudfoot-wang} that it is also unimodal; i.e., for every $1 \leq j \leq \frac{1}{2}\rk(\M)$, the coefficient $[x^{j-1}]Z_\M^W(x)$ is a direct summand of $[x^j]Z_\M^W(x)$. This suggests a notion of equivariant $\gamma$-positivity, as explained in Athanasiadis' survey \cite[Section~5.2]{athanasiadis-gamma-positivity}.

\begin{definition}\label{def:eqgammapos}
    Every palindromic equivariant polynomial $F^W(x) = \sum_{j=0}^d V_j\,x^j $, where $V_j \in \Rep(W)$, can be rewritten in a unique way as
    \begin{equation}\label{eq:equivariant-gamma}
    F^W(x) = \sum_{j=0}^{\lfloor \frac{d}{2} \rfloor} \Gamma_j \,x^j(1+x)^{d-2j},
    \end{equation}
    where $\Gamma_j$ is a virtual representation of $W$ for every $j$. We say that $F^W(x)$ is \emph{equivariantly $\gamma$-positive}, or just \emph{$\Gamma$-positive}, if each $\Gamma_j \in \Rep(W)$, i.e. each $\Gamma_j$ is an honest (rather than virtual) representation.
\end{definition}

Since it was proved in \cite[Theorem~4.7]{ferroni-matherne-stevens-vecchi} that the non-equivariant $Z$-polynomial is $\gamma$-positive, one could hope that this result could be strengthened by showing that the equivariant $Z$-polynomial is $\Gamma$-positive. However, as is shown by the next example, this does not hold in general.

\begin{example}\label{ex:Gammapositivity-fails}
    Consider $\U_{2,2}$, the Boolean matroid on a ground set with two elements, and the action of its full automorphism group $ W =\Aut(\U_{2,2}) = \mathfrak{S}_2 \curvearrowright \U_{2,2}$. One can compute
    \begin{align*}
        Z_\M^W(x) &= V_{[2]} + (V_{[2]} \oplus V_{[1,1]})x + V_{[2]}x^2 \\
        &= V_{[2]}(1+x)^2 + (V_{[1,1]} \ominus V_{[2]}) x.
    \end{align*}
    Thus, $Z_\M^W(x)$ is not $\Gamma$-positive.
\end{example}

When asking for $\Gamma$-positivity, we should take care in specifying which group is acting. By virtue of the following  result, we can also focus on the largest possible group action for which $Z_\M^W(x)$ is $\Gamma$-positive.

\begin{lemma}\label{lem:restriction}
    If $F^G(x)$ is $\Gamma$-positive, then so is $F^H(x)$ for every subgroup $H\leq G$.
\end{lemma}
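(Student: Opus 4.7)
The plan is to exploit the fact that restriction of representations is an exact, additive functor that sends honest (non-virtual) representations to honest representations, and then appeal to uniqueness of the $\Gamma$-expansion.

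First, I would write $F^G(x) = \sum_{j=0}^{\lfloor d/2 \rfloor} \Gamma_j \, x^j(1+x)^{d-2j}$ where each $\Gamma_j \in \Rep(G)$, using the hypothesis of $\Gamma$-positivity. Next, I would apply $\Res_H^G$ coefficient-wise to both sides. Since restriction is additive on (virtual) representations and commutes with multiplication by the scalar polynomial $x^j(1+x)^{d-2j}$, one obtains
\[
F^H(x) = \Res_H^G F^G(x) = \sum_{j=0}^{\lfloor d/2 \rfloor} \Res_H^G(\Gamma_j) \, x^j(1+x)^{d-2j}.
\]

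Then I would invoke the uniqueness of the expansion in \eqref{eq:equivariant-gamma}, applied now to the palindromic polynomial $F^H(x)$: the virtual $H$-representations $\Res_H^G(\Gamma_j)$ must coincide with the $\Gamma$-coefficients of $F^H(x)$. Since the restriction of an honest representation is again an honest representation (the underlying vector space and action merely forget some of the symmetry), each $\Res_H^G(\Gamma_j)$ lies in $\Rep(H)$, which yields the $\Gamma$-positivity of $F^H(x)$.

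There is no real obstacle here; the only subtlety is making sure that the expansion \eqref{eq:equivariant-gamma} is genuinely unique as an identity of virtual representations, so that the $\Gamma$-coefficients of $F^H(x)$ are forced to equal $\Res_H^G(\Gamma_j)$. This uniqueness follows from the same linear-algebra argument that proves uniqueness in the non-equivariant case, applied separately to each isotypic component after decomposing in $\gr\VRep(H)$.
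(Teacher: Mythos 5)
Your proposal is correct and follows essentially the same argument as the paper: restrict the $\Gamma$-expansion of $F^G(x)$ term by term, note that $\Res_H^G$ of an honest representation is honest, and invoke uniqueness of the expansion for $F^H(x)$. The paper's proof is a terser version of exactly this.
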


\begin{proof}
    Write $F^G(x) = \sum_j \Gamma_j x^j (1+x)^{d-2j}$, where each $\Gamma_j$ is honest. However, note that $F^H(x) = \Res_H^G F^G(x)$ and $\Res_H^G \Gamma_j$ are, by construction, all honest representations as well. 
\end{proof}

\begin{proposition}
    If $W$ has even order, then the Boolean matroid $W \curvearrowright \U_{n,n}$ is not $\Gamma$-positive.
\end{proposition}

\begin{proof}
    Any group $W$ of even order has a subgroup isomorphic to $\mathfrak{S}_2$, hence by Lemma \ref{lem:restriction} it suffices to show that $\Gamma$-positivity fails when $W\cong \mathfrak{S}_2$. To do so, we show that the coefficient $\Gamma_1$ is not an honest representation.
    One can compute that
    \[
        [x^1]Z_{\U_{n,n}}^{\mathfrak{S}_n}(x) = V_{[n]} \oplus V_{[n-1,1]}.
    \]
    By induction (the base case is in Example \ref{ex:Gammapositivity-fails}), we have that
    \[
    \Res_{\mathfrak{S}_2}^{\mathfrak{S}_n} (V_{[n]} \oplus V_{[n-1,1]}) = (n-1) V_{[2]} \oplus V_{[1,1]}.
    \]
    Thus, in the notation of equation~\eqref{eq:equivariant-gamma}, it follows that $\Gamma_1 = V_{[1,1]} \ominus V_{[2]}$, which is not honest.
\end{proof}

This shows that the inductive proof of \cite[Theorem~4.7]{ferroni-matherne-stevens-vecchi} cannot be extended to the equivariant setting. The base cases of that induction were precisely the Boolean matroids, and since these are not $\Gamma$-positive in general, here we lack the base case of the induction for a generic equivariant matroid $W\curvearrowright \M$. However, one could still ask whether the ``induction step'' holds. More precisely, it is reasonable to ask for an equivariant analogue of the deletion formulas of Braden and Vysogorets. As we will see below, such formulas do exist.

Before finishing this section let us comment that, to the best of our knowledge, this is the first instance of a family of combinatorial polynomials arising ``in nature'', failing to be equivariantly $\gamma$-positive, yet having the property of being (non-equivariantly) $\gamma$-positive.  

\section{Equivariant deletion formula}\label{sec:equivariant-deletion-formula}

By carefully modifying and generalizing the strategy employed by Braden and Vysogorets  \cite[Theorem~2.8]{braden-vysogorets}, we  prove the equivariant deletion formula presented in Theorem~\ref{thm:equivariant-deletion-formula}.

\begin{proof}[Proof of Theorem~\ref{thm:equivariant-deletion-formula}]
    Let $\operatorname{\gr}_\mathbb{Z}\VRep(W) = \VRep(W) [x^{\pm 1}]$ denote the ring of $\mathbb{Z}$-graded virtual representations, i.e. the ring of Laurent polynomials over the ring of virtual representations, and let $\mathcal{H}^W(\M)$ be the free module over $\operatorname{\gr}_\mathbb{Z}\VRep(W)$ with basis indexed by $\mathcal{L}(\M)/{W}$. Define  
    \[
    \upzeta[E]^{W} := \sum_{[G] \in \mathcal{L}(\M)/W} \left(\upzeta_G^E\right)^W \, [G],
    \]
    where $\left(\upzeta_G^E\right)^W := x^{\rk_{\M}(E) - \rk_{\M}(G)} \Ind_{W_G}^W\left(P_{\M/G}^{W_G}(x^{-2}) \right)$. For a proper flat $F$, define $\upzeta[F]^{W_F}$ similarly in $\mathcal{H}^{W_F}(\M|_F)$. Lastly, define a morphism by letting
    \begin{align*}
    \Delta^{W_i}\colon \mathcal{H}^{W_i}(\M) & \to \mathcal{H}^{W_i}(\M \smallsetminus \{i\})\\
    [F] & \mapsto x^{\rk_{\M \smallsetminus \{i\}}(F \smallsetminus \{i\}) - \rk_{\M}(F)}[F\smallsetminus \{i\}]
    \end{align*}
    and extending $\gr_\mathbb{Z}\VRep(W_i)$-linearly. Now, the element $\Delta^{W_i}(\upzeta[E]^{W_i}) \in \mathcal{H}^{W_i}(\M\smallsetminus \{i\})$ can be written as
    \begin{align*}
        \Delta^{W_i}(\upzeta[E]^{W_i}) & = \sum_{[F] \in \mathcal{L}(\M)/W_i} \left(\upzeta_F^E \right)^{W_i} \, x^{\rk_{\M \smallsetminus \{i\}}(F \smallsetminus \{i\}) - \rk_{\M}(F)}[F\smallsetminus \{i\}].
    \end{align*}
    Therefore, the coefficient corresponding to $[\varnothing]$ is
    \[
        [\varnothing] \, \Delta^{W_i}(\upzeta[E]^{W_i})  = \left(\upzeta_{\varnothing}^E \right)^{W_i} + x^{-1}\left(\upzeta_{\{i\}}^E \right)^{W_i} = x^k \left( P_\M^{W_i}(x^{-2}) + x^{-2} P_{\M/ \{i\}}^{W_i}(x^{-2}) \right).
    \]
    Similarly, we can write
    \[
    \Delta^{W_i}(\upzeta[E]^{W_i}) = \upzeta[E\smallsetminus \{i\}]^{W_i} + \sum_{\substack{[F] \in \mathscr{S}_i /W_i\\ F \neq E\smallsetminus \{i\}}} \Ind_{W_F\cap W_i}^{W_i}\left( \tau^{W_F\cap W_i}(\M/(F\cup \{i\})) \boxtimes \upzeta[F]^{W_F\cap W_i} \right),
    \]
    and taking again the coefficient of $[\varnothing]$ we obtain
    \begin{multline*}
        [\varnothing]\Delta^{W_i}(\upzeta[E]^{W_i})\\ 
        = x^k \left(P_{\M \smallsetminus \{i\}}^{W_i}(x^{-2}) + \sum_{[F] \in \mathscr{S}_i/W_i} x^{-(k - \rk(F))} \Ind_{W_F\cap W_i}^{W_i} \left( \tau^{W_F\cap W_i}(\M/{(F\cup \{i\})}) \boxtimes P_{\M|_F}^{W_F\cap W_i}(x^{-2})\right)\right).
    \end{multline*}
    Dividing by $x^k$ yields the result for the KL polynomial after a change of variable, with $x$ in place of $x^{-2}$. The proof for $Z_{\M}^{W_i}(x)$ is entirely analogous and relies on the definition of the $\gr_\mathbb{Z}\VRep(W_i)$-module map $\Phi_{\M}^{W_i}\colon\mathcal{H}^{W_i}(\M) \to \gr_\mathbb{Z}\VRep(W_i)$ given by
    \[
     \sum_{[F] \in \mathcal{L}(\M)/W_i} \alpha_F \, [F] \enspace \stackrel{\Phi_{\M}^{W_i}}{\longmapsto}\enspace \sum_{[F] \in \mathcal{L}(\M)/W_i} x^{-\rk_\M(F)} \alpha_F.
    \qedhere\]
\end{proof}

\begin{remark}
    As we commented before, the formula arising from Theorem \ref{thm:equivariant-deletion-formula} provides the correct induction step to show the $\Gamma$-positivity for some equivariant matroids $W\curvearrowright \M$, where we now understand that a positive or negative answer depends on $W$. We stress the fact that the datum of $\M$ alone is not enough. Indeed, for a fixed element $i$, if $Z_{\M \setminus \{i\}}^{W_i}(x)$ is $\Gamma$-positive and simultaneously $Z_{\M|_F}^{W_F \cap W_i}(x)$ is $\Gamma$-positive for every $F \in \mathscr{S}_i$, then $Z_\M^{W_i}$ is also $\Gamma$-positive. The reader could ask if anything can be deduced about $\Gamma$-positivity when the element $i$ we delete is a coloop of $\M$. If $W$ does not fix $i$ the answer is usually negative (see again Example \ref{ex:Gammapositivity-fails}), so we will work with respect to the action of $W_i \cong W_i \times  \{1\} \leq W$, where $W_i$ is a group of symmetries for $\M\setminus \{i\}$ and $\{1\}$ is the trivial group fixing the coloop $i$. In this case,
    \[
    Z_\M^{W_i \times \{1\}}(x) = Z_{\M \setminus \{i\}}^{W_i}(x) \otimes \left(\mathbf{1}_{\{1\}}(1+x) \right).
    \]
    In particular, one can see that if the coefficients $\Gamma_j$ of
    \[
    Z_{\M \setminus \{i\}}^{W_i}(x) = \sum_{j=0}^{\left\lfloor \frac{\rk(\M)-1}{2}\right\rfloor}\Gamma_jx^j(1+x)^{\rk(\M)-2j}
    \]
    are all honest representations, then the coefficients of
    \[
    Z_\M^{W_i \times \{1\}}(x) = \sum_{j=0}^{\left\lfloor \frac{\rk(\M)}{2}\right\rfloor}\left(\Gamma_j \otimes \mathbf{1}_{\{1\}} \right)x^j(1+x)^{\rk(\M)-2j + 1}
    \]
    are all honest representations as well. Concretely, that is why
    \[
    Z_{\U_{2,2}}^{\mathfrak{S}_1 \times \mathfrak{S}_1}(x) = (\mathbf{1}_{\mathfrak{S}_1}(1+x)) \otimes (\mathbf{1}_{\mathfrak{S}_1}(1+x)) = (\mathbf{1}_{\mathfrak{S}_1}\otimes \mathbf{1}_{\mathfrak{S}_1}) (1+x)^2 = \mathbf{1}_{\mathfrak{S}_1 \times \mathfrak{S}_1} (1+x)^2
    \]
    is $\Gamma$-positive, while $Z_{\U_{2,2}}^{\mathfrak{S}_2}(x)$ is not.
\end{remark}

The reader should not be misled to think that equivariant $\gamma$-positivity for $Z$-polynomials \emph{always fails} when the action is not trivial. The following example shows that in some cases this property may indeed hold true.

\begin{example}
    Let $\M = \U_{2,3}$ and $W\cong\mathfrak{S}_2$ be a group acting on $\M$ by permuting a fixed pair of elements in the ground set $E$. A computation using the defining recursion yields
    \[
    Z_{\U_{2,3}}^{\mathfrak{S}_2}(x) = V_{[2]} + (2V_{[2]}\oplus V_{[1^2]})x + V_{[2]}x^2 = V_{[2]}(1+x)^2 + V_{[1^2]}x,
    \]
    which is $\Gamma$-positive. Note again how this result could not be obtained inductively. If we delete the element $i$ that is fixed by the action of $W$, Theorem \ref{thm:equivariant-deletion-formula} tells us that
    \begin{align*}
    Z_{\U_{2,3}}^{\mathfrak{S}_2}(x) &= Z_{\U_{2,2}}^{\mathfrak{S}_2}(x) + x\, \tau^{\mathfrak{S}_2}(\U_{2,3} / \{i\})\\
    &= V_{[2]}(1+x)^2 + (V_{[1^2]} \ominus V_{[2]})x + V_{[2]}x = V_{[2]}(1+x)^2 + V_{[1^2]}x,
    \end{align*}
    where the final result is $\Gamma$-positive even though one of the summands on the right-hand side is not. 
\end{example}

Experiments suggest that $Z_{\U_{n-1,n}}^{\mathfrak{S}_2}(x)$ may be $\Gamma$-positive for every $n\geq 1$, while for larger groups it is not. We have checked this by hand for all $n\leq 15$. This motivates us to pose the following question.

\begin{question}
    Does $\Gamma$-positivity of $Z_{\U_{k,n}}^{\mathfrak{S}_{n-k+1}}(x)$ hold for every $n\geq k \geq 1$? If not, what is the largest group $W$ such that $Z_{\U_{k,n}}^W(x)$ has this property?
\end{question}

\section{Applications: gluing of cycles  and corank 2 matroids}\label{sec:applications}

In this section, we will give two applications of Theorem~\ref{thm:equivariant-deletion-formula}.  The first, in Section~\ref{subsec:glued-cycles}, computes the equivariant KL polynomial of the graphic matroid (with respect to the largest possible group of symmetries)  obtained by gluing together two cycle graphs along an edge.  Using this result, together with valuativity of the equivariant KL polynomials in \cite{elias-miyata-proudfoot-vecchi} (also cf. Section~\ref{subsec:eqvtval}), we obtain in Section~\ref{subsec:corank2} a formula for the equivariant KL polynomials of arbitrary corank $2$ matroids.

\subsection{Equivariant KL polynomial of two glued cycles}\label{subsec:glued-cycles}

The goal of this section is to compute the equivariant KL polynomial of the graphic matroid (with respect to the largest possible group of symmetries)  obtained by gluing together two cycle graphs along an edge.  This is an equivariant version of the computation for parallel connection graphs of \cite[Theorem~3.2]{braden-vysogorets} in the case of gluing cycles. Before giving the actual computation in Section~\ref{sec:gluingcycles}, we devote Section~\ref{sec:repthy} to some general results on the representation theory of the symmetric group.

\subsubsection{Representation theory generalities}\label{sec:repthy}

For any undefined terms or concepts in the representation theory of finite groups (especially as it applies to the representation theory of the symmetric group), we refer to \cite{fulton-harris} or \cite{james}. Given an irreducible $\mathfrak{S}_N$-representation $V_\lambda$, we may restrict to a two-factor Young subgroup $\mathfrak{S}_{d} \times \mathfrak{S}_{N-d} \leq \mathfrak{S}_N$ in the following way:
\begin{equation}\label{eq:c-lambda-mu-nu}
\Res^{\mathfrak{S}_N}_{\mathfrak{S}_{d} \times \mathfrak{S}_{N-d}}V_\lambda = \sum_{\mu,\nu} c(\lambda; \mu,\nu)\; V_\mu \otimes V_\nu,
\end{equation}
where the coefficients $c(\lambda;\mu,\nu)$ are called \emph{Littlewood--Richardson coefficients}, and the $V_\mu \otimes V_\nu$ are irreducible representations of $\mathfrak{S}_{d} \times \mathfrak{S}_{N-d}$ (i.e., $V_\mu$ is an irreducible representation of $\mathfrak{S}_{d}$ and $V_\nu$ is an irreducible representation of $\mathfrak{S}_{N-d}$).

Recall that a semistandard skew Young tableau $T$ of shape $\lambda/\mu$ is a filling of the skew shape $\lambda/\mu$ whose rows are weakly increasing and whose columns are strictly increasing. We say that a filling has content $\nu= [\nu_1,\ldots,\nu_h]$ if $\nu_i$ is the number of times $i$ appears in the filling. Now we read the filling by rows from right to left, and for every $s\geq 1$ we count the number of occurrences of each $i$ in the first $s$ cells. If this is a non-decreasing function of $i$ we call the filling a \emph{Littlewood--Richardson (LR) filling}. The number of LR fillings of $\lambda / \mu$ of content $\nu$ is precisely the quantity $c(\lambda;\mu,\nu)$ of equation~\eqref{eq:c-lambda-mu-nu}.

In view of our application to gluing cycles in Section~\ref{sec:gluingcycles}, we now set $\lambda = [N-2i,2^i]$.  Fortunately, the Littlewood--Richardson coefficients $c(\lambda;\mu,\nu)$ are computable for this choice of $\lambda$, and we compute them in Proposition~\ref{coefficients fat hooks} below. (In general, one should not expect to be able to efficiently decide the value of a given $c(\lambda;\mu,\nu)$.) It is clear that if $c(\lambda;\mu,\nu) \neq 0$, then $\mu = [p, 2^\ell, 1^{d-p-2\ell}]$ and $\nu = [q,2^s, 1^{N-d-q-2s}]$, with $1 \leq p,q \leq N-2i$ and $d-p-\ell, N-d-q-s \leq i$; i.e., in this case the tableaux consist respectively of $d$ and $N-d$ cells and they are contained in the diagram associated to $\lambda$.

\begin{proposition}\label{coefficients fat hooks}
The coefficient $c(\lambda;\mu,\nu)$ is non-zero if and only if one of the following holds:
    \begin{itemize}
        \item $q = N-2i-p$ and $s=p+i-d+\ell$,
        \item $q = N-2i-p+1$ and $s=p+i-d+\ell$,
        \item $q = N-2i-p+1$ and $s=p+i-d+\ell-1$,
        \item $q = N-2i-p+2$ and $s=p+i-d+\ell-1$.
    \end{itemize}
Moreover, in each of these cases $c(\lambda; \mu,\nu)=1$.
\end{proposition}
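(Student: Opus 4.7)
The plan is to directly enumerate the Littlewood--Richardson fillings of $\lambda/\mu$ with content $\nu$. Introduce abbreviations $a := N-2i-p$, $b := d-p-2\ell$, and $e := p+i-d+\ell$. Assuming $p \geq 2$ (the case $p = 1$ merges the column-2 top cell into the row-1 strip and is handled analogously), the skew diagram splits into three pieces: a horizontal strip of $a$ cells in row~1, a single-column strip of $b$ cells in column~2 (rows $\ell+2, \ldots, d-p-\ell+1$), and a $2\times e$ rectangle at the bottom (rows $d-p-\ell+2, \ldots, i+1$). I parametrize a filling by the row-1 entries $r_1 \leq \cdots \leq r_a$, column-1 entries $x_1 < \cdots < x_e$ (top to bottom), and column-2 entries $z_1 < \cdots < z_{b+e}$, with the row constraint $x_k \leq z_{b+k}$ in the bottom rectangle.

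The reverse reading word is
\[
r_a, r_{a-1}, \ldots, r_1, \; z_1, \ldots, z_b, \; z_{b+1}, x_1, z_{b+2}, x_2, \ldots, z_{b+e}, x_e.
\]
The first step is to show that row~1 is entirely filled with $1$s: since $r_a$ is the first symbol read, $r_a \geq 2$ would immediately violate the Yamanouchi condition, so $r_a = 1$, and row-monotonicity then gives $r_1 = \cdots = r_a = 1$. The central step is to prove, by induction, that $(z_j)$ and $(x_k)$ are each consecutive integer sequences with starting values in $\{1, 2\}$. For $(z_j)$, the inductive step uses $T(z_j - 1) \geq 1$ together with the chain $x_k \leq z_{b+k} \leq z_{j-1} < z_j - 1$ to exclude the possibility that $z_j - 1$ is an earlier $x_k$, forcing $z_j = z_{j-1}+1$. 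For $(x_k)$, the analogous induction combines $T(x_k - 1) \geq 2$ (when $x_k$ is a multiplicity-two label) with the already-established consecutiveness of $(z_j)$ to rule out jumps.

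Once consecutiveness is established, a filling is determined by the pair $(\alpha, \beta) := (x_1, z_1) \in \{1, 2\}^2$: column~1 becomes $\{\alpha, \alpha+1, \ldots, \alpha + e - 1\}$ and column~2 becomes $\{\beta, \beta+1, \ldots, \beta + b + e - 1\}$. A direct verification shows that each candidate filling satisfies the row constraint (requiring $b \geq 1$ in the sub-case $(\alpha, \beta) = (2,1)$) and the Yamanouchi condition, and computing the total number of $1$s and the number of multiplicity-two labels yields exactly the four pairs $(q, s)$ of the proposition, each with coefficient $1$. Conversely, outside these four pairs no valid filling exists.

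The main obstacle is the rigidity argument for column~$1$: because the $x_k$ are interleaved with the $z_j$ in the reading order, a hypothetical ``jump'' $x_k > x_{k-1}+1$ would require $x_k - 1$ to appear among the $z_i$ read earlier, which is possible \emph{only} if column~2 skips $x_k$; the already-proved consecutiveness of $(z_j)$ rules this out. After this step the remaining case analysis over the four values of $(\alpha,\beta)$ is routine.
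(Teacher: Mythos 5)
Your proof is correct and follows essentially the same route as the paper's: a direct enumeration of the Littlewood--Richardson fillings of the fat-hook skew shape, showing that the first row is forced to consist entirely of $1$'s and that each column is forced to be a consecutive run starting at $1$ or $2$, so the filling is determined by the two starting values $(\alpha,\beta)\in\{1,2\}^2$, yielding the four cases each with coefficient $1$. If anything, your rigidity argument for column $1$ (combining the row inequalities $x_k \le z_{b+k}$ with the already-established consecutiveness of column $2$) is more explicit than the paper's sketch, which simply asserts that the multiplicity-two labels fill the first column ``no more and no less''; like the paper, you defer the degenerate boundary cases (e.g.\ $p=1$, $a\le 1$, $b=0$) to the observation that in those cases the corresponding $\nu$ fails to be a valid partition.
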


Note that not every option in the list produces a valid partition for different shapes of $\lambda = [N-2i,2^i]$ and $\mu$. For example, when $p= N-2i$, i.e. $\mu = [N-2i]$, the only valid partition becomes $\nu = [2^i]$. Moreover, if $p$ (resp. $q$) is equal to 1, we assume $\ell$ (resp. $s$) to be equal to zero and if $p$ (resp. $q$) is equal to 2, then $\ell$ (resp. $s$) counts the number of rows with two boxes starting from the second row (e.g. if $\mu=[2^3]$ then $p=2$ and $\ell=2$).

\begin{proof}
   The idea is that we want to count LR fillings  of shape $\lambda / \mu$ with content $\nu$. First we need to place all the $1$'s---note that there are exactly $q$ of them. At least $N-2i - p$ occurrences of $1$ are needed to fill the first row $\lambda_1 / \mu_1$; we can also have at most two more $1$'s to place in the first and second column. Additional $1$'s would violate the condition of being a semistandard Young tableau, and fewer $1$'s would violate the LR filling condition (the first row would end with a number larger than $1$). Next we move on to the $s$ pairs (the ones corresponding to $\nu_j$ for $2 \leq j \leq s+1$). For simplicity we focus on the first case, because the others are similar. The idea here is to fill exactly the first column with numbers from $2$ to $s+1$, no more and no less; at the same time we are also filling the first $s$ cells of the second column. Lastly, we look at the entries $\nu_j$ for $s+2 \leq j \leq s+t+1$; these can only go in one possible way at the end of the second column.
\end{proof}

\begin{example}
We illustrate the strategy of the previous proof when $\lambda = [6,2^7]$ and $\mu = [3,2^2,1^3]$ (see Figure~\ref{fig:3-22-13}). 
For this skew shape $\lambda/\mu$, the possible contents $\nu$ are
\begin{itemize}
    \item $[3,2^2,1^3]$,
    \item $[4,2^2,1^2]$,
    \item $[4,2,1^4]$,
    \item $[5,2,1^3]$.
\end{itemize}
    \begin{figure}
    \begin{tikzpicture}[scale=0.45]
    \draw[step = 1] (0,0) grid (6,1);
    \draw[step = 1] (0,0) grid (2,-7);
    \draw[very thick, red] (0,1) -- (3,1) -- (3,0) -- (2,0) -- (2,-2) -- (1,-2) -- (1,-5) -- (0,-5) -- (0,1);
    \end{tikzpicture}\caption{$\lambda = [6,2^7]$ and $\mu=[3,2^2,1^3]$.}\label{fig:3-22-13}
    \end{figure}
    
    All four cases in Proposition~\ref{coefficients fat hooks} are indeed feasible, and the associated unique fillings are depicted in Figure~\ref{fig:unique-filling}.
\begin{figure}
\begin{tikzpicture}[scale=0.45]
\draw[step = 1] (0,0) grid (6,1);
\draw[step = 1] (0,0) grid (2,-7);
\draw[very thick, red] (0,1) -- (3,1) -- (3,0) -- (2,0) -- (2,-2) -- (1,-2) -- (1,-5) -- (0,-5) -- (0,1);
\node at (3.5,0.5) {$1$};
\node at (4.5,0.5) {$1$};
\node at (5.5,0.5) {$1$};
\node at (1.5,-2.5) {$2$};
\node at (0.5,-5.5) {$2$};
\node at (1.5,-3.5) {$3$};
\node at (0.5,-6.5) {$3$};
\node at (1.5,-4.5) {$4$};
\node at (1.5,-5.5) {$5$};
\node at (1.5,-6.5) {$6$};
\end{tikzpicture}
\qquad
\begin{tikzpicture}[scale=0.45]
\draw[step = 1] (0,0) grid (6,1);
\draw[step = 1] (0,0) grid (2,-7);
\draw[very thick, red] (0,1) -- (3,1) -- (3,0) -- (2,0) -- (2,-2) -- (1,-2) -- (1,-5) -- (0,-5) -- (0,1);
\node at (3.5,0.5) {$1$};
\node at (4.5,0.5) {$1$};
\node at (5.5,0.5) {$1$};
\node at (1.5,-2.5) {$1$};
\node at (0.5,-5.5) {$2$};
\node at (1.5,-3.5) {$2$};
\node at (0.5,-6.5) {$3$};
\node at (1.5,-4.5) {$3$};
\node at (1.5,-5.5) {$4$};
\node at (1.5,-6.5) {$5$};
\end{tikzpicture}
\qquad
\begin{tikzpicture}[scale=0.45]
\draw[step = 1] (0,0) grid (6,1);
\draw[step = 1] (0,0) grid (2,-7);
\draw[very thick, red] (0,1) -- (3,1) -- (3,0) -- (2,0) -- (2,-2) -- (1,-2) -- (1,-5) -- (0,-5) -- (0,1);
\node at (3.5,0.5) {$1$};
\node at (4.5,0.5) {$1$};
\node at (5.5,0.5) {$1$};
\node at (1.5,-2.5) {$2$};
\node at (0.5,-5.5) {$1$};
\node at (1.5,-3.5) {$3$};
\node at (0.5,-6.5) {$2$};
\node at (1.5,-4.5) {$4$};
\node at (1.5,-5.5) {$5$};
\node at (1.5,-6.5) {$6$};
\end{tikzpicture}
\qquad
\begin{tikzpicture}[scale=0.45]
\draw[step = 1] (0,0) grid (6,1);
\draw[step = 1] (0,0) grid (2,-7);
\draw[very thick, red] (0,1) -- (3,1) -- (3,0) -- (2,0) -- (2,-2) -- (1,-2) -- (1,-5) -- (0,-5) -- (0,1);
\node at (3.5,0.5) {$1$};
\node at (4.5,0.5) {$1$};
\node at (5.5,0.5) {$1$};
\node at (1.5,-2.5) {$1$};
\node at (0.5,-5.5) {$1$};
\node at (1.5,-3.5) {$2$};
\node at (0.5,-6.5) {$2$};
\node at (1.5,-4.5) {$3$};
\node at (1.5,-5.5) {$4$};
\node at (1.5,-6.5) {$5$};
\end{tikzpicture}
\caption{The unique filling in each of the four cases}\label{fig:unique-filling}
\end{figure}
\end{example}

\subsubsection{Application to gluing cycles}\label{sec:gluingcycles}

We now apply the results of the previous subsection to compute the equivariant KL polynomial of the graphic matroid associated to the gluing of two cycles along an edge.  This will be also instrumental for the computation of equivariant KL polynomials of corank $2$ matroids. 

First we recall that the graphic matroid $\C_n$ associated to an $n$-cycle graph is isomorphic to the uniform matroid of corank one $\U_{n-1,n}$, so we will use the notation $\C_n$ and $\U_{n-1,n}$ interchangeably. On this matroid, there is a natural action of $\mathfrak{S}_n$ permuting the ground set elements. When we glue two cycles of lengths $a$ and $b$ along a common edge $e$, the resulting graphic matroid, which we will denote as $\C_{a,b}$, inherits an action of $W_e \leq\mathfrak{S}_{a+b-2}$, the stabilizer of $e$. (Notice that this group acts on the matroid, not on the underlying graph.) We a priori do not require that the labels of the cycle of length $a$ and the labels of the cycle of length $b$ are given by intervals of consecutive integers. In particular, for two sets $A$ and $B$ such that $A\cap B=\{e\}$, we will write $\C_A$, $\C_B$, and $\C_{A,B}$ as labelled counterparts of $\C_a$, $\C_b$, and $\C_{a,b}$, where $a = |A|$ and $b=|B|$. As usual, we denote $E = A\cup B = [n]$ for the full ground set of $\C_{A,B}$.

There is an isomorphism $W_e \cong \mathfrak{S}_{A\smallsetminus e} \times \mathfrak{S}_{B\smallsetminus e}$, and thus one may compute via Theorem~\ref{thm:equivariant-deletion-formula} that
\begin{equation}\label{eq:equiv-Cab}
    P_{\C_{A,B}}^{\mathfrak{S}_{A\smallsetminus e}\times \mathfrak{S}_{B\smallsetminus e}}(x) = P_{\C_{E\smallsetminus e}}^{\mathfrak{S}_{A\smallsetminus e}\times \mathfrak{S}_{B\smallsetminus e}}(x) - x P_{\C_{A\smallsetminus e}}^{\mathfrak{S}_{A\smallsetminus e}}(x)\otimes P_{\C_{B\smallsetminus e}}^{\mathfrak{S}_{B\smallsetminus e}}(x).
\end{equation}

To obtain equation~\eqref{eq:equiv-Cab}, we note that, in the case of parallel connection graphs, the sum appearing on the right-hand side of the deletion formula of Theorem~\ref{thm:equivariant-deletion-formula} vanishes.  To see this, we first point out that the analogous non-equivariant computation in \cite{braden-vysogorets} implies that $\dim \tau^{W_F \cap W_i}(\M / (F \cup \{i\})) = 0$, and then \cite[Theorem~1.3 (1)]{braden-huh-matherne-proudfoot-wang}, asserting that equivariant KL polynomials are honest representations, implies that $\tau^{W_F \cap W_i}(\M / (F \cup \{i\})) = 0$ as a representation.

The following useful proposition first appeared in \cite[Theorem~1.2(2)]{proudfoot-wakefield-young} and can be obtained by specializing Theorem~\ref{thm:unif-equiv-sn} to the case of a corank $1$ uniform matroid.

\begin{proposition}\label{prop:kl-corank-1-uniform}
    The $\mathfrak{S}_n$-equivariant Kazhdan--Lusztig polynomial of the corank $1$ uniform matroid $\U_{n-1,n}$ is given by
    \[
    P_{\U_{n-1,n}}^{\mathfrak{S}_n}(x) = P_{\C_n}^{\mathfrak{S}_n}(x) = \sum_{j=0}^{\left\lfloor \frac{n-2}{2} \right\rfloor} V_{[n-2j,2^j]}x^j.
    \]
\end{proposition}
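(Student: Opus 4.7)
The proposition is a direct specialization of Theorem~\ref{thm:unif-equiv-sn} to the case $k = n-1$, and the plan is to carry out this specialization explicitly while carefully handling the two edge cases of the general formula (the constant term, and the vanishing of high-degree terms).

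First, I would observe that with $k = n-1$ we have $n - k = 1$, so the upper index in Theorem~\ref{thm:unif-equiv-sn} becomes
\[
\min(n-k,\, k-2j) \;=\; \min(1,\, n-1-2j).
\]
For $1 \leq j \leq \lfloor (n-2)/2 \rfloor$ we have $n-1-2j \geq 1$, so this minimum equals $1$ and the sum in the theorem contains only the single term $b=1$. Substituting $b=1$ and $k = n-1$ into the general expression yields
\[
[x^j] P_{\U_{n-1,n}}^{\mathfrak{S}_n}(x) \;=\; V_{[n-2j-1+1,\, 1+1,\, 2^{j-1}]} \;=\; V_{[n-2j,\, 2,\, 2^{j-1}]} \;=\; V_{[n-2j,\, 2^j]},
\]
which is exactly the coefficient claimed. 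For $j \geq \lfloor (n-2)/2 \rfloor + 1$, i.e.\ $j \geq (n-1)/2$, one has $n-1-2j \leq 0$ so the sum is empty and the coefficient vanishes, matching the upper limit of summation in the proposition.

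Second, I would handle the constant term ($j = 0$) separately, as the general formula in Theorem~\ref{thm:unif-equiv-sn} formally involves $2^{-1}$ when $j=0$; the first sentence of that theorem already states that $[x^0] P_{\U_{k,n}}^{\mathfrak{S}_n}(x) = V_{[n]}$, and this agrees with the $j=0$ instance of our claimed formula since $V_{[n,\, 2^0]} = V_{[n]}$. Putting the three ranges of $j$ together gives the desired closed form. The identification $P_{\C_n}^{\mathfrak{S}_n}(x) = P_{\U_{n-1,n}}^{\mathfrak{S}_n}(x)$ is simply the graphic-matroid description already noted in Section~\ref{sec:gluingcycles}, where it is pointed out that $\C_n \cong \U_{n-1,n}$ as $\mathfrak{S}_n$-matroids.

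There is essentially no obstacle here; the only subtlety is the bookkeeping around $j=0$, which is why the proof must invoke the ``constant term'' clause of Theorem~\ref{thm:unif-equiv-sn} rather than literally substituting $j=0$ into the displayed sum. The proof is therefore a short verification, intended primarily to record the explicit form of the polynomial that will be fed into Equation~\eqref{eq:equiv-Cab} in the subsequent computation of $P_{\C_{A,B}}^{\mathfrak{S}_{A\smallsetminus e} \times \mathfrak{S}_{B\smallsetminus e}}(x)$.
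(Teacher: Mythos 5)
Your proposal is correct and matches the paper's (implicit) argument exactly: the paper states the proposition as a direct specialization of Theorem~\ref{thm:unif-equiv-sn} to $k=n-1$, and your computation---the minimum collapsing to $b=1$, the partition $[n-2j,2,2^{j-1}]=[n-2j,2^j]$, and the separate treatment of the constant term---is precisely that specialization carried out in detail.
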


As we will see below, the preceding result and Proposition~\ref{coefficients fat hooks} are the only tools needed for computing the equivariant KL polynomial of two cycles glued together.  In fact, the first term on the right-hand side of equation~\eqref{eq:equiv-Cab} can be rewritten as follows:
    \begin{equation}\label{eq:equiv-corank1-product-sn}
    P_{\C_{E \smallsetminus e}}^{\mathfrak{S}_{A \smallsetminus e}\times \mathfrak{S}_{B\smallsetminus e}}(x) = \Res_{\mathfrak{S}_{A\smallsetminus e}\times \mathfrak{S}_{B\smallsetminus e}}^{\mathfrak{S}_{E \smallsetminus e}} P_{\C_{E \smallsetminus e}}^{\mathfrak{S}_{E \smallsetminus e}}(x).
    \end{equation}
The following example explains how to give a concrete computation for the right-hand side in equation~\eqref{eq:equiv-corank1-product-sn}. 

\begin{example}\label{ex: U89}
    We will compute the equivariant KL polynomial of $(\mathfrak{S}_4\times \mathfrak{S}_5) \curvearrowright \C_{5,6}$, where we abuse notation and write $\C_{5,6}$ as a shorthand for $\C_{A,B}$ for $A=\{1,\ldots,5\}$ and $B=\{5,\ldots,10\}$.

    First, we carry out the computation of equation~\eqref{eq:equiv-corank1-product-sn}.  To this end, consider the equivariant matroid $(\mathfrak{S}_4 \times \mathfrak{S}_5) \curvearrowright \U_{8,9}$ on the groundset $\{1,\ldots,10\}\setminus \{5\}$, where $\mathfrak{S}_5$ is meant to act on the subset of labels $\{6,7,8,9,10\}$. Its equivariant Kazhdan--Lusztig polynomial is given by
    \[
    P_{\U_{8,9}}^{\mathfrak{S}_4 \times \mathfrak{S}_5}(x) = \Res_{\mathfrak{S}_4 \times \mathfrak{S}_5}^{\mathfrak{S}_9} \left( V_{[9]} + V_{[7,2]}x + V_{[5,2^2]}x^2 + V_{[3,2^3]}x^3\right).
    \]
    By carefully substituting all the possible values from Proposition~\ref{coefficients fat hooks} we see that
    \begin{align*}
    [x^0]P_{\U_{8,9}}^{\mathfrak{S}_4 \times \mathfrak{S}_5}(x) &= V_{[4]}\otimes V_{[5]}, \\
    [x^1]P_{\U_{8,9}}^{\mathfrak{S}_4 \times \mathfrak{S}_5}(x) &= (V_{[4]}\otimes V_{[5]}) \oplus (V_{[4]}\otimes V_{[4,1]}) \oplus (V_{[4]}\otimes V_{[3,2]}) \\ 
    &\qquad\oplus (V_{[3,1]}\otimes V_{[5]}) \oplus (V_{[3,1]}\otimes V_{[4,1]}) \oplus (V_{[2^2]}\otimes V_{[5]}),\\
    [x^2]P_{\U_{8,9}}^{\mathfrak{S}_4 \times \mathfrak{S}_5}(x) &= (V_{[4]}\otimes V_{[3,2]})\oplus (V_{[4]}\otimes V_{[2^2,1]})\oplus (V_{[3,1]}\otimes V_{[4,1]}) \\
    &\qquad\oplus (V_{[3,1]}\otimes V_{[3,2]})\oplus (V_{[3,1]}\otimes V_{[3,1^2]})\oplus (V_{[3,1]}\otimes V_{[2^2,1]})\\
    &\qquad\oplus (V_{[2^2]}\otimes V_{[5]})\oplus (V_{[2^2]}\otimes V_{[4,1]})\oplus  (V_{[2^2]}\otimes V_{[3,2]})\\
    &\qquad\oplus (V_{[2,1^2]}\otimes V_{[4,1]})\oplus (V_{[2,1^2]}\otimes V_{[3,1^2]}),\\
    [x^3]P_{\U_{8,9}}^{\mathfrak{S}_4 \times \mathfrak{S}_5}(x) &= (V_{[3,1]}\otimes V_{[2^2,1]})\oplus (V_{[2^2]}\otimes V_{[2^2,1]})\oplus (V_{[2^2]}\otimes V_{[3,2]}) \\
    &\qquad\oplus (V_{[2,1^2]}\otimes V_{[2^2,1]})\oplus (V_{[2,1^2]}\otimes V_{[2,1^3]})\oplus (V_{[2,1^2]}\otimes V_{[3,1^2]}) \\
    &\qquad\oplus (V_{[1^4]}\otimes V_{[2,1^3]}).
    \end{align*}

  Now we move on to the tensor product on the right-hand side of equation~\eqref{eq:equiv-Cab}.   Proposition \ref{prop:kl-corank-1-uniform} asserts that  $P_{\C_4}^{\mathfrak{S}_4}(x) = V_{[4]} + V_{[2^2]}x$ and $P_{\C_5}^{\mathfrak{S}_5}(x) = V_{[5]} + V_{[3,2]}x$, and by subtracting them from the coefficients computed before we obtain
    \begin{align*}
    [x^0]P_{\C_{5,6}}^{\mathfrak{S}_4 \times \mathfrak{S}_5}(x) &= V_{[4]}\otimes V_{[5]}, \\
    [x^1]P_{\C_{5,6}}^{\mathfrak{S}_4 \times \mathfrak{S}_5}(x) &= (V_{[4]}\otimes V_{[4,1]}) \oplus (V_{[4]}\otimes V_{[3,2]})\oplus (V_{[3,1]}\otimes V_{[5]})\\
    & \qquad\oplus (V_{[3,1]}\otimes V_{[4,1]}) \oplus (V_{[2^2]}\otimes V_{[5]}),\\
    [x^2]P_{\C_{5,6}}^{\mathfrak{S}_4 \times \mathfrak{S}_5}(x) &= (V_{[4]}\otimes V_{[2^2,1]})\oplus (V_{[3,1]}\otimes V_{[4,1]}) \oplus (V_{[3,1]}\otimes V_{[3,2]}) \\
    &\qquad\oplus (V_{[3,1]}\otimes V_{[3,1^2]})\oplus (V_{[3,1]}\otimes V_{[2^2,1]})\oplus  (V_{[2^2]}\otimes V_{[4,1]})\\
    &\qquad\oplus  (V_{[2^2]}\otimes V_{[3,2]}) \oplus (V_{[2,1^2]}\otimes V_{[4,1]})\oplus (V_{[2,1^2]}\otimes V_{[3,1^2]}),\\
    [x^3]P_{\C_{5,6}}^{\mathfrak{S}_4 \times \mathfrak{S}_5}(x) &= (V_{[3,1]}\otimes V_{[2^2,1]})\oplus (V_{[2^2]}\otimes V_{[2^2,1]}) \oplus (V_{[2,1^2]}\otimes V_{[2^2,1]})\\
    &\qquad\oplus (V_{[2,1^2]}\otimes V_{[2,1^3]})\oplus (V_{[2,1^2]}\otimes V_{[3,1^2]}) \oplus (V_{[1^4]}\otimes V_{[2,1^3]}).
    \end{align*}
For comparison, the non-equivariant Kazhdan--Lusztig polynomials are listed below:
    \begin{align*}
        P_{\C_{5,6}}(x) &= 74x^3+113x^2+26x+1,\\
        P_{\U_{8,9}}(x) &= 84x^3+120x^2+27x+1.
    \end{align*}
As expected, the non-equivariant coefficients give the dimensions of the corresponding representations.
\end{example}

\subsection{Corank 2 matroids.}

We recall the machinery of Elias, Proudfoot, Miyata, and Vecchi \cite{elias-miyata-proudfoot-vecchi}, which implies  that the equivariant KL polynomials of matroids are matroid valuations. As an application of this fact, Theorem~\ref{thm:equivariant-deletion-formula}, and Section~\ref{subsec:glued-cycles}, we will deduce new formulas for arbitrary matroids of corank $2$, and therefore will generalize results of Ferroni and Schr\"oter \cite[Section~9.1]{ferroni-schroter}.

\subsubsection{Equivariant valuativity for Kazhdan--Lusztig polynomials.} \label{subsec:eqvtval}

We recall the construction from \cite{elias-miyata-proudfoot-vecchi}.  The construction is technical, and we refer the reader to \cite{elias-miyata-proudfoot-vecchi} for details. Consider the category $\mathcal{M}$ of matroids with rank preserving weak maps. For every increasing sequence of non-negative integers $\textbf{k} = (k_1,\ldots, k_r)$ we define the \emph{Whitney functor} $\Phi_{\textbf{k}}\colon \mathcal{M} \to \operatorname{Vec}_\mathbb{Q}$ as follows. For a given $\M$ and such a $\textbf{k}$, we define
    \[
    \mathcal{L}_{\textbf{k}}(\M) = \{F_1\subseteq\cdots \subseteq F_r \mid F_i\in \mathcal{L}(\M) \text { and } \rk(F_i) = k_i \text{ for all $1\leq i\leq r$} \}.
    \]
Then, on objects, $\Phi_{\textbf{k}}(\M)$ is a vector space with basis $\mathcal{L}_\textbf{k}(\M)$ and on morphisms, if $\varphi\colon \M \to \M'$ and $(F_1,\ldots, F_r) \in \mathcal{L}_\textbf{k}(\M)$, then
\[
\Phi_{\textbf{k}}(\varphi)(F_1,\ldots, F_r) = \begin{cases}
    \left(\overline{\varphi(F_1)},\ldots, \overline{\varphi(F_r)}\right) & \text{if $\rank_{\M'}\left(\varphi(F_i)\right) = k_i$ for all $i$,}\\
    0 & \text{otherwise.}
\end{cases}
\]

We now define the \emph{Kazhdan--Lusztig functor} $\operatorname{KL}\colon \mathcal{M} \to \operatorname{bigrVec}_\mathbb{Q}$ into the category of bigraded vector spaces as follows. On matroids with loops we set $\operatorname{KL}(\M) = 0$. On the full subcategory of loopless matroids of rank $k$, we define
\[
\operatorname{KL} := \amathbb{1} \oplus \bigoplus_{r=1}^i \bigoplus_{R \subseteq[r]} \bigoplus_{\substack{a_0 < a_i < \dots < a_r < a_{r+1} \\ a_0 = 0 \\ a_r = i \\ a_{r+1}= k-i}} \Phi_{\textbf{k}}(-i,-|R|),
\]
where $\amathbb{1}$ is the trivial functor that maps every matroid $\M$ to a 1-dimensional $\mathbb{Q}$-vector space, $\textbf{k} = (k_1,\ldots, k_r)$, $k_j = k - a_{s_{r+1 - j}(R)} - a_{r-j}$, and $s_j(R) := \min\{ \ell \in \mathbb{Z}\smallsetminus R \mid \ell\geq j\}$. 
By definition, this functor categorifies
\[
\widetilde{P}_{\M}(t,u) := \sum_{i,j}\dim \operatorname{KL}^{i,j}t^iu^j,
\]
and by \cite[Theorem 6.1]{proudfoot-xu-young}
\[
\widetilde{P}_{\M}(x,-1) := P_\M(x).
\]

Let $\mathcal{N}$ be a matroid subdivision of some matroid polytope $\M$, and let $\Omega$ be an orientation of $\mathcal{N}$. In light of \cite[Corollary~8.15]{elias-miyata-proudfoot-vecchi}, it is possible to write an exact sequence of bigraded vector spaces involving all the internal faces of $\mathcal{N}$. Now let $W$ be a group preserving the subdivision $\mathcal{N}$. For every $\N \in \mathcal{N}$, the bigraded vector space $\operatorname{KL}(\N)$ carries an action of the stabilizer $W_{\N}$. One can show that the virtual graded $W_{\N}$-representation
\[
\sum_j (-1)^j\operatorname{KL}^{i,j}(\N)
\]
is equal to the coefficient $[x^i]P_{\N}^{W_{\N}}(x)$ (see the discussion in \cite[Section~9]{elias-miyata-proudfoot-vecchi}). 

A subset $S$ of a matroid $\M$ is \emph{stressed} if the restriction $\M|_S$ and the contraction $\M/S$ are uniform matroids. A matroid is called \emph{elementary split} if it does not contain minors isomorphic to $\U_{0,1}\oplus \U_{1,2}\oplus \U_{1,1}$ (see \cite[Section~4]{ferroni-schroter}). For these matroids, we obtain the next equation, which follows from \cite[Proposition~9.6]{elias-miyata-proudfoot-vecchi} and provides a categorical generalization of \cite[Theorem~1.4]{ferroni-schroter}.

\begin{theorem}\label{thm:equiv-val-for-split}
    Let $W \curvearrowright \M$ be an equivariant elementary split matroid of rank $k$ on the ground set $E$, and let $\mathcal{F}$ denote its family of stressed flats. Then
    \[
    P_\M^W(x) = P_{\U_{k,n}}^W(x) - \sum_{F \in \mathcal{F}/W} \Ind_{W_F}^W \left(P_{\LL_{r,k,F,E}}^{W_F}(x) - P_{\PP_{r,k,F,E}}^{W_F}(x)\right).
    \]
    The matroids $\LL_{r,k,F,E}$ appearing on the right-hand side are matroids on $E$ having rank $k$ and a unique distinguished stressed flat $E\smallsetminus F$ of rank $k-r$. The matroid $\PP_{r,k,F,E}$ has ground set $E$ and rank $k$, and is given by the direct sum of two uniform matroids, $\U_{k-r,E\smallsetminus F}\oplus \U_{r,F}$. 
\end{theorem}

\begin{remark}\label{rem:notation-cuspidal}
    For the particular case in which $F = \{1,\ldots,h\}$, where $h = |F|$, it is customary to write $\LL_{r,k,h,n}$ and $\PP_{r,k,h,n}$, which closely follows the notation of \cite[Section~3]{ferroni-schroter}. (In that article, only isomorphism classes of these matroids are relevant so the choice of the labelling of $F$ is not taken into account.)
\end{remark}

\subsubsection{Formula for corank 2 matroids.}\label{subsec:corank2}

 We now describe the equivariant KL polynomial of arbitrary matroids of corank $2$.\footnote{One may obtain similar formulas for the equivariant $Z$-polynomials of arbitrary matroids of corank $2$ if one has closed formulas for uniform matroids of corank $1$ and $2$.} The key observation is the following: all matroids of corank $2$ can be written as a direct sum of loops, coloops and one connected (elementary) split matroid. By basic properties of equivariant KL polynomials we may disregard the loops and the coloops and focus on the connected summand that remains. This strategy has been used by Ferroni and Schr\"oter in \cite[Section~9.1]{ferroni-schroter} in order to compute the non-equivariant KL and $Z$-polynomials of corank $2$ matroids. To simplify the exposition we will in fact assume that our matroid is indeed connected. 

If we specialize Theorem~\ref{thm:equiv-val-for-split} for the case in which $k=n-2$, we need to describe in a reasonable way the matroids $\LL_{r,n-2,F,E}$ and $\PP_{r,n-2,F,E}$ appearing in that statement. In any connected matroid of corank $2$ all the restrictions at stressed flats are uniform of corank $1$, i.e., $\M|_F=\U_{|F|-1,F}$.  In particular, $\LL_{r,n-2,F,E} = \LL_{|F|-1,n-2,F,E}$, while $\PP_{r,n-2,F,E} = \PP_{|F|-1,n-2,F,E} \cong \U_{n-1-|F|,n-|F|}\oplus \U_{|F|-1,|F|}$, i.e., the last matroid is a direct sum of two corank $1$ uniform matroids. Hence, specializing Theorem~\ref{thm:equiv-val-for-split} for connected corank $2$ matroids, the statement reads as follows.

\begin{proposition}
    Let $W \curvearrowright \M$ be an equivariant connected matroid of rank $n-2$ on the ground set $E=[n]$ and let $\mathcal{F}$ denote its family of stressed flats. Then,
    \begin{multline*}    
    P_\M^W(x) = P_{\U_{n-2,n}}^W(x)\enspace - \\
    \sum_{F \in \mathcal{F}/W} \left(\Ind_{W_F}^W \left(P_{\LL_{|F|-1,n-2,F,E}}^{W_F}(x)\right) - \Ind_{W_F}^W \left( P_{\U_{n-1-|F|,n-|F|}}^{W_F}(x) \boxtimes P_{\U_{|F|-1,F}}^{W_F}(x)\right)\right).
    \end{multline*}    
\end{proposition}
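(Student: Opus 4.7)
The plan is to specialize Theorem~\ref{thm:equiv-val-for-split} directly to the corank-$2$ setting. Since $\M$ is a connected matroid of corank $2$, it is elementary split by the structural result recalled just above the proposition (following \cite{ferroni-schroter}), so Theorem~\ref{thm:equiv-val-for-split} applies and yields
\[
P_\M^W(x) = P_{\U_{n-2,n}}^W(x) - \sum_{F \in \mathcal{F}/W} \Ind_{W_F}^W\!\left(P_{\LL_{r,n-2,F,E}}^{W_F}(x) - P_{\PP_{r,n-2,F,E}}^{W_F}(x)\right),
\]
where $r = \rk_\M(F)$. The remainder of the argument will reduce to two substitutions that are valid in the corank-$2$ case.

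The first substitution uses that $r = |F|-1$ for every stressed flat $F$, which is the content of the paragraph immediately preceding the proposition: the restriction $\M|_F$ must be uniform of rank $r \leq |F|$, and the case $r = |F|$ (i.e., $\M|_F$ Boolean) is ruled out because, combined with the uniformity of $\M/F$ and the corank-$2$ assumption, it would force a nontrivial direct-sum decomposition of $\M$, contradicting connectivity. Hence $\M|_F = \U_{|F|-1,\,F}$, and the first term inside the induction simply becomes $P_{\LL_{|F|-1,n-2,F,E}}^{W_F}(x)$.

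The second substitution is to compute $\PP_{|F|-1,n-2,F,E}$ using Remark~\ref{rem:notation-cuspidal}, obtaining the direct sum $\U_{n-1-|F|,\,E\smallsetminus F} \oplus \U_{|F|-1,\,F}$. To pass from the equivariant KL polynomial of this direct sum to a box product, I will invoke the multiplicativity of equivariant KL polynomials under $W_F$-equivariant direct sums, namely
\[
P_{\M_1 \oplus \M_2}^{W_F}(x) = P_{\M_1}^{W_F}(x) \boxtimes P_{\M_2}^{W_F}(x),
\]
which follows from Definition~\ref{def:equivariant-kl-z-polynomials} together with the fact that the lattice of flats of $\M_1 \oplus \M_2$ factors as a product of the two component lattices of flats, compatibly with the $W_F$-action. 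Substituting both identifications into the formula from Theorem~\ref{thm:equiv-val-for-split} then yields the claimed identity. The only substantive point I anticipate is tracking the $W_F$-action through the direct-sum multiplicativity; all other steps are direct consequences of the cited theorem and remark.
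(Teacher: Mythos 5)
Your overall route is the same as the paper's: the proposition is obtained by specializing Theorem~\ref{thm:equiv-val-for-split} to $k=n-2$, identifying the restriction at each stressed flat as a corank-$1$ uniform matroid, and rewriting $P_{\PP_{|F|-1,n-2,F,E}}^{W_F}(x)$ via the multiplicativity of equivariant KL polynomials over direct sums. The reduction to Theorem~\ref{thm:equiv-val-for-split} (connected corank-$2$ matroids are elementary split) and the direct-sum step are fine.

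The flaw is in your justification of the substitution $r=|F|-1$. Since the coranks of $\M|_F$ and $\M/F$ add up to $2$, the restriction can a priori have corank $0$, $1$, or $2$; you only consider coranks $0$ and $1$, and you invoke connectivity against the wrong one. The case that connectivity genuinely excludes is corank $2$: if $\M|_F=\U_{|F|-2,F}$ then $\M/F$ is Boolean, and then every $e\in E\smallsetminus F$ satisfies $\rk(E\smallsetminus\{e\})=\rk(F)+(n-|F|-1)=\rk(E)-1$, so $e$ is a coloop of $\M$, contradicting connectedness. By contrast, a Boolean restriction does \emph{not} force a direct-sum decomposition: in the connected corank-$2$ matroid $\U_{2,4}$, any singleton $F$ is a flat with $\M|_F=\U_{1,1}$ Boolean and $\M/F=\U_{1,3}$ uniform. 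Such flats are harmless for a different reason: when $\M|_F$ is Boolean the relaxation at $F$ is trivial, so either $F$ does not belong to $\mathcal{F}$ under the non-empty-cusp convention of Ferroni--Schr\"oter, or else $\LL_{|F|,n-2,F,E}=\PP_{|F|,n-2,F,E}$ and the corresponding summand vanishes. Replacing your dichotomy with this one closes the argument; everything else you write is consistent with the paper's (essentially one-paragraph) derivation.
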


In particular, all except one of the equivariant KL polynomials appearing on the right-hand side follow immediately by the discussion in Section~\ref{subsec:glued-cycles} and Theorem~\ref{thm:unif-equiv-sn}, as they correspond to uniform matroids of corank $1$ and $2$. Thus, the only remaining difficult computation that one needs to perform is $P_{\LL_{|F|-1,n-2,F,E}}^{W_F}(x)$.

The main strategy to simplify this calculation is to leverage that the base polytope of $\LL_{|F|-1,n-2,F,E}$ admits an explicit matroid subdivision. The crucial observation made in \cite[Proposition~5.6]{ferroni-schroter} is that all the maximal pieces in such subdivision are matroids of the form $\C_{a,b}$. Therefore, we now categorify \cite[Corollary~5.7]{ferroni-schroter}.

\begin{lemma}
    Let $W$ be a group acting on the matroid subdivision described in \cite[Proposition~5.6]{ferroni-schroter} of the matroid $\LL_{r,n-2,r+1,n}$ (cf. Remark~\ref{rem:notation-cuspidal}). Then,
    \begin{multline*} 
    P_{\LL_{r,n-2,r+1,n}}^W(x) = \\ \sum_{a=2}^{n-r-1} P^W_{\C_{a,n+1-a}}(x) - \sum_{a=2}^{n-r-2} \Ind_{(\mathfrak{S}_{a}\times \mathfrak{S}_{n-a}) \cap W}^{W} \Res_{(\mathfrak{S}_{a}\times \mathfrak{S}_{n-a}) \cap W}^{\mathfrak{S}_{a}\times \mathfrak{S}_{n-a}} \left( P^{\mathfrak{S}_{a}}_{\C_{a}}(x)\boxtimes P^{\mathfrak{S}_{n-a}}_{\C_{n-a}}(x)\right),
    \end{multline*}
    where $\mathfrak{S}_{a}\times \mathfrak{S}_{n-a}$ denotes the product of the actions that permute respectively the first $a$ and the last $n-a$ elements of $E=[n]$.
\end{lemma}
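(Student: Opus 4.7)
The plan is to apply the categorical valuativity machine of Section~\ref{subsec:eqvtval} to the explicit matroid subdivision of $\LL_{r,n-2,r+1,n}$ furnished by \cite[Proposition~6.9]{ferroni-schroter}. That subdivision has maximal cells isomorphic to $\C_{a,n+1-a}$ for $a\in\{2,\ldots,n-r-1\}$, and its interior codimension-one faces are direct sums $\C_a\oplus \C_{n-a}$ for $a\in\{2,\ldots,n-r-2\}$. By hypothesis, $W$ preserves this subdivision together with its labeling by $a$, so after fixing an orientation $\Omega$ the setup required by \cite[Corollary~8.14]{elias-miyata-proudfoot-vecchi} is in place.

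First, one feeds this oriented subdivision into the Kazhdan--Lusztig functor $\operatorname{KL}$, obtaining an exact sequence of bigraded vector spaces whose terms are $\operatorname{KL}(\N)$ summed over internal faces $\N$. Functoriality of $\operatorname{KL}$ equips each $\operatorname{KL}(\N)$ with a natural action of its $W$-stabilizer $W_\N$, and inducing from $W_\N$ up to $W$ converts the exact sequence into one of bigraded $W$-representations indexed by $W$-orbits of internal faces. Taking alternating Euler characteristics in the second grading and invoking $\sum_j (-1)^j \operatorname{KL}^{i,j}(\N) = [x^i]P_\N^{W_\N}(x)$, recalled in Section~\ref{subsec:eqvtval}, yields an identity of virtual graded $W$-representations expressing $P^W_{\LL_{r,n-2,r+1,n}}(x)$ as an alternating sum over those orbits.

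The next step is to match the resulting contributions with the two sums in the statement. The maximal cells produce $\sum_{a=2}^{n-r-1} P^W_{\C_{a,n+1-a}}(x)$, with the induction from each cell stabilizer up to $W$ absorbed into this notation. For a codimension-one interior face $\N\cong \C_a\oplus \C_{n-a}$, the full matroid symmetry group of $\N$ is $\mathfrak{S}_a\times \mathfrak{S}_{n-a}$ and its $W$-stabilizer is $(\mathfrak{S}_a\times \mathfrak{S}_{n-a})\cap W$; combining the multiplicativity of equivariant KL polynomials over direct sums, namely $P^{\mathfrak{S}_a\times \mathfrak{S}_{n-a}}_{\C_a\oplus \C_{n-a}}(x) = P^{\mathfrak{S}_a}_{\C_a}(x)\boxtimes P^{\mathfrak{S}_{n-a}}_{\C_{n-a}}(x)$, with restriction to the actual stabilizer and induction up to $W$ produces exactly the second sum of the formula, with the negative sign coming from the codimension-one term in the alternating sum.

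The main obstacle is expected to be the bookkeeping underpinning this reduction: one must verify that all interior faces of codimension at least two contribute trivially (this should follow as in the non-equivariant argument of \cite[Corollary~6.10]{ferroni-schroter}, using that such faces acquire loops and are therefore killed by $\operatorname{KL}$ by definition), and that the indexing by $a$ genuinely parametrizes $W$-orbits of cells and of codimension-one faces so that the induction/restriction formalism captures the full equivariant structure. Once these points are settled, the identity is a formal consequence of the exact sequence, and it categorifies \cite[Corollary~6.10]{ferroni-schroter} in exactly the expected way.
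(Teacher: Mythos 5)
Your proposal is correct and follows essentially the same route as the paper, whose entire proof is a one-line appeal to \cite[Proposition~9.6]{elias-miyata-proudfoot-vecchi} using the hypothesis that $W$ acts on the subdivision. What you have written is simply an unpacking of that citation (the oriented exact sequence of \cite[Corollary~8.14]{elias-miyata-proudfoot-vecchi}, the Euler-characteristic identity with $[x^i]P_\N^{W_\N}(x)$, and the matching of maximal cells and interior facets with the two sums), together with the correct bookkeeping caveats.
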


\begin{proof}
    This follows from an immediate application of \cite[Proposition~9.6]{elias-miyata-proudfoot-vecchi} because $W$ is assumed to act on the matroid subdivision.
\end{proof}

\begin{remark}
    We note that the group $\mathfrak{S}_{E\smallsetminus F}$ always acts on the subdivision mentioned above, because all the matroids that appear on the subdivision have $E\smallsetminus F$ as an independent set.
\end{remark}

Since we know how to compute the equivariant KL polynomial of the matroids $\C_{A,B}$, the preceding lemma implies that we are able to compute in a satisfactory way the equivariant KL polynomial of $\LL_{|F|-1,n-2,F,E}$, and thus of any connected corank $2$ matroid on an arbitrary action $W$ that preserves the matroid subdivision described in \cite[Proposition~5.6]{ferroni-schroter}.

\section*{Acknowledgments}

We thank the organizers of the Oberwolfach workshop ``Arrangements, matroids, and logarithmic vector fields''. A substantial part of this work was carried out at MFO. We also thank Tom Braden, Chris Eur, Nicholas Proudfoot, and Matthew Stevens for various conversations that helped us during the writing process of this article.  We thank two anonymous referees for suggestions and edits that greatly improved the quality of the paper.

\bibliographystyle{amsalpha}
\bibliography{bibliography}

\end{document}